\newtheorem{thm}{Theorem}[section]
\newtheorem{lem}[thm]{Lemma}
\newtheorem{cor}[thm]{Corollary}
\newtheorem{prop}[thm]{Proposition}
\newenvironment{example}
{\pushQED{\qed}\examplex}
{\popQED\endexamplex}
\title{\bf \Large A graph discretization of vector Laplace operator}
\author{
{\small  Shu Li$^a$,  \ \ Lu Lu$^b$, \ \ Jianfeng Wang$^{a,}$\footnote{Corresponding author.\newline{\hspace*{5mm}Email addresses: shuligraph@gmail.com (S. Li), lulugdmath@163.com (L. Lu),  jfwang@sdut.edu.cn (J.F.Wang).}}}\\[2mm]
\footnotesize $^a$School of Mathematics and Statistics, Shandong University of Technology, Zibo 255049, China\\
\footnotesize $^b$School of Mathematics and Statistics, HNP-LAMA, Central South University,  Changsha 410083, China
}
\date{}
\begin{document}
\maketitle
	
\setcounter{page}{1}
\begin{abstract}
In this paper, we study the graph-theoretic analogues of vector Laplacian (or Helmholtz operator) and   vector Laplace equation. We  determine the graph matrix representation of vector Laplacian and obtain the dimension of solution space of vector Laplace equation on graphs.\\

\noindent {\it AMS classification:} 35J05, 05C50\\[1mm]
\noindent {\it Keywords}: Laplace operator; Vector Laplacian; Helmholtz operator; Graph; Hodage Laplacian.
\end{abstract}


\section{Introduction}

In Mathematics and Physics, Laplace's equation is one of the most famous differential equation
named after Pierre-Simon de Laplace, who firstly presented and investigated its properties. In general, this is often written as $$\nabla^2 f = 0,$$
where $\nabla^2$ is the  Laplace operator (or Laplacian) operating a scalar function $f$. With that in mind, the Laplacian is also referred to as {\it scalar Laplacian}. In the context of real space $\mathbb{R}^3$ and rectangular coordinates, then $$\nabla^2 f = \operatorname{div}\operatorname{grad}f = \frac{\partial^2 f}{\partial x_1^2} + \frac{\partial^2 f}{\partial x_2^2} + \frac{\partial^2 f}{\partial x_3^2},$$
where $\operatorname{div} = \nabla \cdot$ and $\operatorname{grad} = \nabla$ are respectively the {\it divergence} and {\it gradient} operators. Recall, $$\operatorname{grad}f=\nabla f=(\frac{\partial f}{\partial x_1},\frac{\partial f}{\partial x_2},\frac{\partial f}{\partial x_3}) \quad \mbox{and} \quad \operatorname{div} \mathbf{F}=\nabla\cdot \mathbf{F}=\frac{\partial F_1}{\partial x_1}+\frac{\partial F_2}{\partial x_2}+\frac{\partial F_3}{\partial x_3},$$ where $\mathbf{F}$ is a vector field.

In the setting of Graph Theory, the scalar Laplacian $\nabla^2 = -{\rm div\!~grad}$ gives the celebrated {\it Laplacian matrix} of a graph \cite[Lemma 5.6, eg.]{lim2020}, defined by $$L(G) = D(G)-A(G)$$ where $D(G) = {\rm diag}(d(v_1), d(v_2),\cdots, d(v_n))$ is the degree diagonal matrix with $d(v)$ being the degree of vertex $v$  and  $A(G)=(a_{ij})$ is the  adjacency matrix of $G$ in which $a_{ij}=1$ if $v_iv_j \in E(G)$ and $0$ otherwise. Proverbially, the graph Laplacian has been studied extensively in Spectral Graph Theory  and has so much influence on many areas. For good survey articles on the graph Laplacian the reader is referred to \cite{mer,moh}.

We here pay attention to the {\it vector Laplacian}. In $\mathbb{R}^3$, the {\it curl} of a vector field  $\mathbf{F}$ is defined to be $$\operatorname{curl} \mathbf{F}=\nabla\times \mathbf{F}=(\frac{\partial F_3}{\partial x_2}-\frac{\partial F_2}{\partial x_3},\frac{\partial F_1}{\partial x_3}-\frac{\partial F_3}{\partial x_1},\frac{\partial F_2}{\partial x_1}-\frac{\partial F_1}{\partial x_2}).$$ Then the {\it vector Laplace equation} is defined as
\begin{equation}
\nabla^2 \mathbf{F}= (\nabla^2 \mathbf{F}_1,\nabla^2 \mathbf{F}_2,\nabla^2 \mathbf{F}_3) =0.
\end{equation}
A  straightforward calculation shows that
\begin{equation}\label{vector-F}
\nabla^2 \mathbf{F}= \operatorname{grad}\operatorname{div}\mathbf{F}-\operatorname{curl}\operatorname{curl} \mathbf{F},
\end{equation}
which is called {\it vector Laplace operator} (or {\it vector Laplacian} \cite{moon1953}) or {\it Helmholtz operator} \cite{jiang-lim};.

What is clear from the literature is that the operators involving $\nabla^2 = \nabla\cdot\nabla$ on a vector function start back in 1911
\cite[p133]{coffin1911}. However, the difference between the scalar and vector Laplacians was not unrecognized in a long period, which caused that the progress
in this direction had been hampered. The meaning of the vector Laplacian was not clear untill 1953, credit to Moon and Spencer \cite{moon1953}, who developed a
general equation for the vector Laplacian in any orthogonal, curvilinear coordinate system. Remark that the scalar and vector Laplacians are special cases of
Hodge Laplacians, named from the Hodge theory on graphs \cite{lim2020}, and that the Laplace equation and vector Laplace equation are respectively the special
cases of Helmholtz equations $$\nabla^2 \theta - k^2\theta=0,$$ in which $\theta$ is a scalar function or a vector field.

For the purpose here, we consider the adjoint operators of $\operatorname{grad}$ and $\operatorname{curl}$ denoted by $\operatorname{grad}^*$ and $\operatorname{curl}^*$ respectively. Due to $\operatorname{div} = -\operatorname{grad}^*$ and $\operatorname{curl} = \operatorname{curl}^*$ \cite[pp. 207]{stra-book}, then an equivalent expression of \eqref{vector-F} is shown as follows
\begin{equation}\label{vector-F1}
-\nabla^2 \mathbf{F}= \operatorname{grad}\operatorname{grad}^*\mathbf{F}+\operatorname{curl}^*\operatorname{curl} \mathbf{F}.
\end{equation}

In this paper, we fucus on the graph-theoretic analogues of vector Laplacian and vector Laplace equation. In Section 2 we determine the graph matrix representation, {\it Helmholzian matrix}, of vector Laplacian, which put the way for establish a spectral graph theory based on this new matrix. In Section 3 we identify the dimension of solution space of vector Laplace equation on graphs. As a corollary,  the number of triangles in a graph is also obtained. In Section 4 we give some remarks and point out some potential applications of the results obtained in this paper.

\section{Vector Laplacian on Graphs}

Let $G =(V(G),E(G))$ be an undirected simple graph with vertex set $V(G) =\{v_1,v_2,\cdots,v_n\}$ and edge set $E(G) = \{e_1,e_2,\cdots,e_m\}$, where its order is $n=|V(G)|$ and its size is $m=|E(G)|$.  Let $T$ be the set of triangles in $G$. We define the real valued functions on its vertex set $\phi: V \rightarrow R$. Moreover,  we request the real valued functions on $E$ and $T$ to be alternating. By an alternating function on $E$, we mean a function of the form $\varphi: V \times V \rightarrow \mathbb{R}$, where
$$
\varphi(i,j)=
\begin{cases}
-\varphi(j,i), & \mbox{if $\{v_i,v_j\}\in E$};\\
0, & \mbox{otherwise}.
\end{cases}
$$
An alternating function on $T$ is one of the form $\psi: V \times V \times V \rightarrow \mathbb{R}$, where
$$
\varphi(i,j,k)=\psi(j,k,i)=\psi(k,i,j)=
\begin{cases}
-\psi(j,i,k)=-\psi(i,k,j)=-\psi(k,j,i), & \mbox{if $\{v_i,v_j,v_k\}\in T$};\\
0, & \mbox{otherwise}.
\end{cases}
$$

From the topological point of view, the functions $\phi,\varphi$ and $\psi$ are called 0-, 1-, 2-{\it cochains}, which are discrete analogues of differential forms on manifolds \cite{warner}. Let the Hilbert spaces of $0$-, $1$- and $2$-cochains be respectively $L^2(V)$, $L^2_{\wedge}(E)$ and $L^2_{\wedge}(T)$ with $\wedge$ indicating alternating and with standard $L^2$-inner products defined by
$$\langle \phi_1,\phi_2\rangle_V=\sum_{i\in V}\phi_1(i)\phi_2(i),\;\langle \varphi_1,\varphi_2\rangle_E=\sum_{i\le j}\varphi_1(i,j)\varphi_2(i,j),\;
\langle \psi_1,\psi_2\rangle_T=\sum_{i<j<k}\psi_1(i,j,k)\psi_2(i,j,k).$$

We describe the graph-theoretic analogues of $\operatorname{grad}$, $\operatorname{curl}$, and $\operatorname{div}$ in multivariate calculus \cite{lim2020}. The {\it gradient} is the linear operator grad: $L^2(V) \rightarrow L_{\wedge}^2(E)$ defined by
$$
(\operatorname{grad}\phi)(i,j)=\phi(j)-\phi(i)
$$
for all $\{i, j\} \in E$ and zero otherwise. The {\it curl} is the linear operator curl: $L_{\wedge}^2(E) \rightarrow L_{\wedge}^2(T)$ defined by
$$
(\operatorname{curl}\varphi)(i,j,k)=\varphi(i,j)+\varphi(j,k)+\varphi(k,i)
$$
for all $\{i,j,k\} \in T$ and zero otherwise. The {\it divergence} is the linear operator div: $L_{\wedge}^2(E) \rightarrow L^2(V)$ defined by
$$
(\operatorname{div}\varphi)(i)=\sum_{j=1}^n\varphi(i,j)
$$
for all $i\in V$. Then the graph-theoretic analogue of the vector Laplacian, called {\it graph Helmholtzian},  is defined by $\Lambda= -\operatorname{grad}\operatorname{div}+\operatorname{curl^{\ast}}\operatorname{curl}$ \cite[pp. 692]{lim2020}. From $\operatorname{div}=-\operatorname{grad^{\ast}}$ \cite[Lemma 5.4]{lim2020} it follows that $\Lambda$ can be expressed as
\begin{equation}\label{eqhelm}
\Lambda = \operatorname{grad}\operatorname{grad^{\ast}}+\operatorname{curl^{\ast}}\operatorname{curl}.
\end{equation}
Note, Helmholtz Decomposition Theorem for the clique complex of a graph correlates closely with the kernel of graph Helmholtzian $\Lambda$ \cite[Theorem 2]{jiang-lim}. In addition, the graph Helmholtzian is a special case of {\it Hodge Laplacians}, a higher-order generalization of the graph Laplacian, due to Lim \cite{lim2020}.

\begin{table}[htbp]
\centering
\begin{tabular}{|c|c|c|c|}
\hline
\multicolumn{3}{|c|}{Symbol} & Diagram \\
\hline
\multirow{2}*{$u\sim v$} & \multicolumn{2}{c|}{$u\rightarrow v$} &
\includegraphics[scale=0.6]{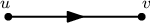}  \\[1.6mm]
\cline{2-4}
~ & \multicolumn{2}{c|}{$v\rightarrow u$} & \includegraphics[scale=0.6]{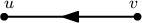}  \\[1.6mm]
\hline
\multirow{2}*{$u\in e$} & \multicolumn{2}{c|}{$u\rightarrow e$~($u=e^-$)} &
\includegraphics[scale=0.6]{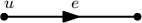}  \\[1.6mm]
\cline{2-4}
~ & \multicolumn{2}{c|}{$e\rightarrow u$~($u=e^+$)} & \includegraphics[scale=0.6]{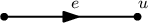}  \\[1.6mm]
\hline
\multirow{4}*{$e_1\sim e_2$} & \multirow{2}*{$e_1\overset{\pm}{\sim}e_2$} &
$e_1\overset{+}{\sim}e_2$ & \includegraphics[scale=0.6]{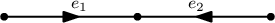}  \\[1.6mm]
\cline{3-4}
~ & ~ & $e_1\overset{-}{\sim}e_2$ & \includegraphics[scale=0.6]{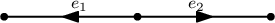}  \\[1.6mm]
\cline{2-4}
~& \multirow{2}*{$e_1\leftrightarrow e_2$} & $e_1\rightarrow e_2$ &
\includegraphics[scale=0.6]{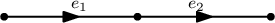}\\[1.6mm]
\cline{3-4}
~ & ~ & $e_2\rightarrow e_1$ &  \includegraphics[scale=0.6]{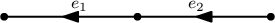}\\[1.6mm]
\hline
\multicolumn{3}{|c|}{$e_1\vartriangle e_2$} &
\raisebox{-.5\height}{\includegraphics[scale=0.6]{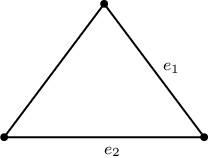}}\\[1.6mm]
\hline
\multirow{2}*{$e\in\vartriangle$} & \multicolumn{2}{c|}{$e\in\vartriangle^+$} &
\raisebox{-.5\height}{ \includegraphics[scale=0.6]{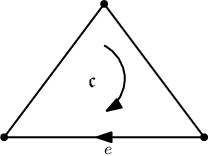}}\\[1.6mm]
\cline{2-4}
~ & \multicolumn{2}{c|}{$e\in\vartriangle^-$} &
\raisebox{-.5\height}{\includegraphics[scale=0.6]{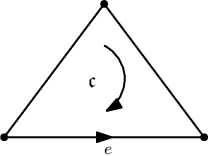}} \\[1.6mm]
\hline
\end{tabular}
\caption{The relations between vertices, edges and triangles.}
\label{tab-1}
\end{table}

We emphatically derive a graph matrix representation of graph Helmholtzian. Given arbitrary orientations to the edges and triangles of $G$, the {\it trail}
and the {\it head} of an oriented edge $e$ are respectively marked by $e^-$ and $e^+$. Set
$\mathcal{V}(e)=\{e^-,e^+\}$. If there is a directed edge from $u$ to $v$, then we write
$u\rightarrow v$. If two vertices $u$ and $v$ are adjacent, then we write $u\sim v$, and $u
\nsim v$ otherwise.  Therefore, $u\sim v$
implies either $u\rightarrow v$ or $v\rightarrow u$. If a vertex $v$ satisfies $v\in
\mathcal{V}(e)$, then we write $v \in e$. Furthermore, set $v\rightarrow e$ if $v=e^-$, and
$e\rightarrow v$ if $v=e^+$. For two edges $e_1$ and $e_2$, let $e_1\sim e_2$ if
$\mathcal{V}(e_1)\cap
\mathcal{V}(e_2)\ne\emptyset$. Put
$e_1\rightarrow e_2$ if $e_1^+=e_2^-$, $e_1\overset{+}{\sim}e_2$ if $e_1^+=e_2^+$, and
$e_1\overset{-}{\sim}e_2$  if $e_1^-=e_2^-$. Denote by $e_1\leftrightarrow e_2$ if
either $e_1\rightarrow e_2$ or $e_2 \rightarrow e_1$, and denote by $e_1\overset{\pm}{\sim}e_2$
if either $e_1\overset{+}{\sim}e_2$ or $e_1\overset{-}{\sim}e_2$. Set $e_1\vartriangle e_2$ if $e_1$
and $e_2$ are in a same triangle. For an edge $e$ and a triangle
$\vartriangle$, write $e\in\vartriangle$ if $e$ is an edge of $\vartriangle$. Furthermore, if
the orientation of $e$ is coincident with that of $\vartriangle$ then we write
$e\in\vartriangle^+$, and $e\in\vartriangle^-$ otherwise.  To make the symbols more clear, we
collect them in Tab. \ref{tab-1}.

For an edge $e \in E(G)$, the {\it triangle degree of $e$}, denoted by $\triangle_G(e),$ is the number of triangles containing $e$, that is,
$$\triangle_G(e)=|\{\vartriangle\in T(G)\mid e\in \vartriangle\}|.$$   The {\it edge-vertex incidence matrix}
$\mathcal{B}(G)=(b_{ev})_{m\times n}$ and {\it triangle-edge incidence matrix}
$\mathcal{C}(G)=(c_{\vartriangle e})_{t\times m}$ are severally defined by
\begin{equation}\label{B-C}
b_{ev}=
\left\{\begin{array}{cc}
-1,& v\rightarrow e\\
1,&e\rightarrow v\\
0,&\textrm{otherwise}
\end{array}\right. \;\;\textrm{and}\;\; c_{\vartriangle e}=
\left\{\begin{array}{cc}-1,&
e\in\vartriangle^-\\
1,&e\in\vartriangle^+\\
0,&\textrm{otherwise}
\end{array}\right.,
\end{equation}
whose examples are given in Fig. \ref{fig-1}.

\vspace{-3mm}

\begin{figure}[htbp]
\begin{minipage}{0.4\linewidth}
\includegraphics[scale=0.8]{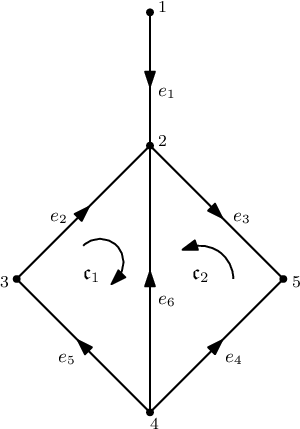}
\end{minipage}
\hspace{-2.5cm}\begin{minipage}{0.6\linewidth}
{\footnotesize\[\mathcal{B}(G)=\left(\begin{array}{ccccc}
-1&1&0&0&0\\
0&1&-1&0&0\\
0&-1&0&0&1\\
0&0&0&-1&1\\
0&0&1&-1&0\\
0&1&0&-1&0
\end{array}\right)
\textrm{and}\;\;
\mathcal{C}(G)=\left(\begin{array}{cccccc}
0&1&0&0&1&-1\\
0&0&-1&1&0&-1
\end{array}\right).\]}
\end{minipage}
\caption{The matrices $\mathcal{B}(G)$ and $\mathcal{C}(G)$ of the presented graph $G$.}
\label{fig-1}
\end{figure}

\newpage

The lemma below indicates the relations among the operators and matrices mentioned above.

\begin{lem}\label{lem-1}
The operator $\operatorname{grad}\operatorname{grad^{\ast}}$ gives the matrix ${\mathcal B}{\mathcal B}^{\top}$ and the operator $\operatorname{curl^{\ast}}\operatorname{curl}$ gives the matrix ${\mathcal C}^{\top}{\mathcal C}$.
\end{lem}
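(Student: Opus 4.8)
The plan is to identify each elementary operator with its matrix relative to the standard bases of $L^2(V)$, $L^2_\wedge(E)$, and $L^2_\wedge(T)$ (the indicator cochains of vertices, oriented edges, and oriented triangles) and then read off the two composites as matrix products. The first thing I would check is that, once the arbitrary orientations are fixed, all three inner products listed above reduce to the ordinary dot products on $\mathbb{R}^n$, $\mathbb{R}^m$, $\mathbb{R}^t$. The point is that for alternating cochains the summand $\varphi_1(i,j)\varphi_2(i,j)$ is unchanged when $(i,j)$ is replaced by $(j,i)$, since the two sign flips cancel; likewise $\psi_1(i,j,k)\psi_2(i,j,k)$ is invariant under any permutation of $i,j,k$. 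Hence $\langle\cdot,\cdot\rangle_E$ and $\langle\cdot,\cdot\rangle_T$ coincide with the dot products of the coordinate vectors obtained by evaluating each cochain in its chosen orientation. Consequently the indicator cochains form orthonormal bases, and the adjoint of any operator is represented by the transpose of its matrix.

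Next I would verify the two matrix identifications at the level of the basic operators. For the gradient, expanding $(\mathcal{B}\phi)_e=\sum_v b_{ev}\phi(v)=b_{e\,e^-}\phi(e^-)+b_{e\,e^+}\phi(e^+)=\phi(e^+)-\phi(e^-)$ reproduces exactly $(\operatorname{grad}\phi)(i,j)=\phi(j)-\phi(i)$ with $i=e^-$ and $j=e^+$, so $\operatorname{grad}$ is represented by $\mathcal{B}$. For the curl, write $\varphi^{(e)}=\varphi(e^-,e^+)$ for the value of $\varphi$ in the fixed orientation of $e$, fix a triangle $\triangle$ oriented $i\to j\to k\to i$, and compute $(\mathcal{C}\varphi)_\triangle=\sum_e c_{\triangle e}\varphi^{(e)}$. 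Each boundary edge contributes $\varphi(i,j)$, $\varphi(j,k)$, or $\varphi(k,i)$ regardless of whether its fixed orientation agrees with $\triangle$: in the disagreeing case the factor $c_{\triangle e}=-1$ cancels the sign flip in $\varphi^{(e)}$. This matches $(\operatorname{curl}\varphi)(i,j,k)=\varphi(i,j)+\varphi(j,k)+\varphi(k,i)$, so $\operatorname{curl}$ is represented by $\mathcal{C}$.

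Combining the two steps finishes the argument: by the adjoint-equals-transpose principle from the first paragraph, $\operatorname{grad}^*$ is represented by $\mathcal{B}^\top$ and $\operatorname{curl}^*$ by $\mathcal{C}^\top$, whence the composites $\operatorname{grad}\operatorname{grad}^*$ and $\operatorname{curl}^*\operatorname{curl}$ are represented by the products $\mathcal{B}\mathcal{B}^\top$ and $\mathcal{C}^\top\mathcal{C}$. As a consistency check I would also confirm that $\operatorname{div}$ is represented by $-\mathcal{B}^\top$, which both recovers the identity $\operatorname{div}=-\operatorname{grad}^*$ quoted from \cite{lim2020} and validates the sign conventions in \eqref{B-C}. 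The only delicate point in the whole proof is the orientation bookkeeping: one must ensure that every sign introduced by reorienting an edge or triangle is exactly compensated by the corresponding entry of $\mathcal{B}$ or $\mathcal{C}$, so that the resulting matrices are genuinely independent of the chosen orientation. Once the inner-product reduction of the first step is in place, this bookkeeping is routine and the remainder is formal.
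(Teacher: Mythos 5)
Your proof is correct and takes essentially the same route as the paper: fix orientations, observe that the indicator cochains $\delta_v,\delta_e,\delta_{\vartriangle}$ form orthonormal bases of the three cochain spaces (so adjoints correspond to transposes), verify that $\operatorname{grad}$ and $\operatorname{curl}$ are represented by $\mathcal{B}$ and $\mathcal{C}$, and read off the composites as $\mathcal{B}\mathcal{B}^{\top}$ and $\mathcal{C}^{\top}\mathcal{C}$. One caveat about your closing remark only: the matrices $\mathcal{B}$, $\mathcal{C}$, and hence $\mathcal{B}\mathcal{B}^{\top}$ and $\mathcal{C}^{\top}\mathcal{C}$ are \emph{not} independent of the chosen orientations (reversing an edge conjugates them by a diagonal $\pm 1$ matrix, visible in the sign pattern of Theorem \ref{H-def}), but since the lemma is stated relative to a fixed orientation and your actual argument never uses such independence, this does not affect the validity of the proof.
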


\begin{proof}
Assign $E(G)$ and $T(G)$ arbitrary orientations. For any $v\in V(G)$, let $\delta_v\in
L^2_{\wedge}(V)$ be the function such that $\delta_v(x)=1$ if $x=v$ and $0$ otherwise. For any edge
$e\in E(G)$, let $\delta_{e}\in L^2_{\wedge}(E)$ denote the function such that
$$\delta_e(i,j)=-\delta_e(j,i)=1$$ if $e=\{i,j\}$ and $0$ otherwise. For any
$\vartriangle\in T(G)$, set $\delta_{\vartriangle}\in L^2_{\wedge}(T)$ to be the function such that
$$\delta_{\vartriangle}(i,j,k)=\delta_{\vartriangle}(j,k,i)=\delta_{\vartriangle}(k,i,j)
=-\delta_{\vartriangle}(j,i,k)=-\delta_{\vartriangle}(i,k,j)=-\delta_{\vartriangle}(k,j,i)=1$$
if $\vartriangle= \{i,j,k\}$ and $0$ otherwise. Clearly, $\{\delta_v\mid v\in V(G)\}$,
$\{\delta_e\mid e\in E(G)\}$ and $\{\delta_{\vartriangle}\mid\vartriangle\in T(G)\}$ are
orthonormal basis of $L^2_{\wedge}(V)$, $L^2_{\wedge}(E)$ and $L^2_{\wedge}(T)$, respectively. Assume that ${\rm grad}$
gives the matrix $\mathcal{X}$ and ${\rm curl}$ gives the matrix ${\mathcal{Y}}$ under these basis. It suffices to
show that $\mathcal{X}=\mathcal{B}$ and $\mathcal{Y}={\mathcal C}$ since $f^{\ast}$ gives the matrix $F^*$ if $f$ gives the matrix $F$.

Note that the $(e,v)$-th entry of $\mathcal{X}$ is $$\mathcal{X}_{e,v}=\langle
\operatorname{grad}\delta_v,\delta_e\rangle_E=\sum_{i\le
j}(\operatorname{grad}\delta_v)(i,j)\delta_e(i,j)=\delta_v(e^+)-\delta_v(e^-),$$ which yields that
$\mathcal{X}_{e,v}=1$ if $v=e^+$, $-1$ if $v=e^-$ and $0$ otherwise, and therefore $\mathcal{X}=\mathcal{B}$.

Similarly, the $(\vartriangle,e)$-th entry of $\mathcal{Y}$ is $$\mathcal{Y}_{\vartriangle,e}=\langle
{\rm curl}~\delta_e,\delta_{\vartriangle}\rangle_T=\sum_{x<y<z}({\rm curl}~\delta_e)(x,y,z)
\delta_{\vartriangle}(x,y,z) =\delta_e(i,j)+\delta_e(j,k)+\delta_e(k,i),$$ where
$\vartriangle=(i,j,k)$. Hence, if $e\in \vartriangle^+$ then $e\in\{(i,j),(j,k),(k,i)\}$
and thus $\mathcal{Y}_{\vartriangle,e}=1$; if $e\in\vartriangle^-$ then $e\in\{(j,i),(k,j),(i,k)\}$ and
thus $\mathcal{Y}_{\vartriangle,e}=-1$; if $e\not\in\vartriangle$ then $\mathcal{Y}_{\vartriangle,e}=0$. Thereby,
$\mathcal{Y}=\mathcal{C}$.
\end{proof}

\begin{thm}\label{H-def}
Let $G$ be a graph with orientations on its edge set $E$ and triangle set $T$. The graph Helmholtzian gives the square matrix $\mathcal{H}(G)=(h_{ee'})$ indexed by the edge set
of $G$ with
$$
h_{ee'}=
\begin{cases}
\triangle(e)+2, & \mbox{if $e'=e$};\\
-1, & \mbox{if $e\leftrightarrow e'$ and $e\not\vartriangle e'$};\\
1, & \mbox{if $e'\overset{\pm}{\sim} e$ and $e\not\vartriangle e'$}; \\
0, & \mbox{otherwise}.
\end{cases}
$$
\end{thm}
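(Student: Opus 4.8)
The plan is to start from Lemma \ref{lem-1}, which identifies $\operatorname{grad}\operatorname{grad}^{\ast}$ with $\mathcal{B}\mathcal{B}^{\top}$ and $\operatorname{curl}^{\ast}\operatorname{curl}$ with $\mathcal{C}^{\top}\mathcal{C}$. Combined with the expression \eqref{eqhelm} for $\Lambda$, this immediately gives that the graph Helmholtzian is represented by the $m\times m$ matrix
\begin{equation*}
\mathcal{H}(G)=\mathcal{B}\mathcal{B}^{\top}+\mathcal{C}^{\top}\mathcal{C},
\end{equation*}
so it suffices to compute the $(e,e')$-entry of each summand and add them. First I would treat the two pieces in isolation. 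For the down part, $(\mathcal{B}\mathcal{B}^{\top})_{ee'}=\sum_{v}b_{ev}b_{e'v}$; since each row of $\mathcal{B}$ has exactly two nonzero entries ($+1$ at the head, $-1$ at the tail) and two distinct edges of a simple graph meet in at most one vertex, this sum equals $2$ when $e=e'$, equals $+1$ when $e\overset{\pm}{\sim}e'$, equals $-1$ when $e\leftrightarrow e'$, and vanishes when $e\not\sim e'$. For the up part, $(\mathcal{C}^{\top}\mathcal{C})_{ee'}=\sum_{\vartriangle}c_{\vartriangle e}c_{\vartriangle e'}$; because two distinct edges lie in at most one common triangle, this equals $\triangle(e)$ when $e=e'$, equals $c_{\vartriangle e}c_{\vartriangle e'}=\pm1$ for the unique common triangle when $e\vartriangle e'$, and vanishes otherwise.

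Adding the two contributions is routine except in the single case where both are simultaneously nonzero, namely when $e$ and $e'$ share a vertex \emph{and} a triangle ($e\vartriangle e'$). This overlap is exactly the source of the ``otherwise $=0$'' clause, and is where the real content lies. The key observation I would isolate is that, in a consistently (cyclically) oriented triangle $\vartriangle$, every vertex is the head of exactly one positively oriented edge and the tail of the other. Concretely, writing $b$ for the vertex shared by $e$ and $e'$ inside $\vartriangle$, one of the two edges sees $b$ as a head and the other sees $b$ as a tail relative to the triangle's orientation, so that $b_{eb}=c_{\vartriangle e}$ for one and $b_{e'b}=-c_{\vartriangle e'}$ for the other (the roles being interchangeable). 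Hence $b_{eb}b_{e'b}=-c_{\vartriangle e}c_{\vartriangle e'}$, and the down and up contributions cancel, yielding $h_{ee'}=0$. I would confirm this identity by the short enumeration of the four orientation patterns of $e,e'$ at $b$ against the two possible signs of each edge relative to $\vartriangle$, checking in each line that the products $b_{eb}b_{e'b}$ and $c_{\vartriangle e}c_{\vartriangle e'}$ always carry opposite signs, independently of the (arbitrary) orientations chosen.

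Finally I would collect the cases: the diagonal gives $\triangle(e)+2$; a pair with $e\leftrightarrow e'$ but $e\not\vartriangle e'$ receives only the down contribution $-1$; a pair with $e\overset{\pm}{\sim}e'$ but $e\not\vartriangle e'$ receives only $+1$; a pair sharing a triangle cancels to $0$; and everything else is $0$. This reproduces the stated formula for $h_{ee'}$. The main obstacle is precisely the cancellation step: one must track the interplay between the arbitrary edge orientations, the arbitrary triangle orientation, and the head/tail role of the shared vertex, and verify that these conspire so that the down and up terms annihilate each other for every choice of orientations. The slick route through the head/tail dichotomy of a cyclically oriented triangle is what makes this transparent and avoids a case-by-case sign chase.
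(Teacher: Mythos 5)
Your proposal is correct and follows essentially the same route as the paper: both invoke Lemma \ref{lem-1} together with \eqref{eqhelm} to write $\mathcal{H}(G)=\mathcal{B}\mathcal{B}^{\top}+\mathcal{C}^{\top}\mathcal{C}$ and then evaluate the $(e,e')$-entries of the two summands case by case. The only difference is in the final cancellation step: where the paper enumerates the orientation subcases ($e\leftrightarrow e'$ versus $e\overset{\pm}{\sim}e'$) and checks that the down and up contributions are $-1,+1$ or $+1,-1$ respectively, you derive the single identity $b_{eb}\,b_{e'b}=-c_{\vartriangle e}\,c_{\vartriangle e'}$ from the head/tail roles of the shared vertex in the cyclically oriented triangle, which is a valid and marginally cleaner way to dispatch all sign patterns at once.
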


\begin{proof}
From Lemma \ref{lem-1}, by \eqref{eqhelm} the vector Laplacian $\mathcal{H}(G)$ gives the graph matrix
\begin{equation}\label{H=B+C}
\mathcal{H}(G)={\mathcal B}(G){\mathcal B}(G)^{\top} + {\mathcal C}(G)^{\top}{\mathcal C}(G).
\end{equation}
By immediate calculations, the $(e,e')$-th entry of $\mathcal{H}(G)$ is given as
\[\begin{array}{lll}
\mathcal{H}_{e,e'}&=&(\mathcal{B}\mathcal{B}^{\top})_{e,e'}+(\mathcal{C}^{\top}\mathcal{C})_{e,e'}\\[2mm]
&=&\sum_{v\in V(G)}\mathcal{B}_{e,v}\mathcal{B}^{\top}_{v,e'}+\sum_{\vartriangle\in T(G)}(\mathcal{C}^{\top})_{e,\vartriangle}\mathcal{C}_{\vartriangle,e'}\\[2mm]
&=&\sum_{v\in V(G)}\mathcal{B}_{e,v}\mathcal{B}_{e',v}+\sum_{\vartriangle\in T(G)}\mathcal{C}_{\vartriangle,e}\mathcal{C}_{\vartriangle,e'}\\[2mm]
&=&\sum_{v\in \mathcal{V}(e)\cap \mathcal{V}(e')}\mathcal{B}_{e,v}\mathcal{B}_{e',v}+\sum_{e,e' \in \vartriangle}\mathcal{C}_{\vartriangle,e}\mathcal{C}_{\vartriangle,e'}.
\end{array}\]
If $e=e'$, then $$\sum_{v\in \mathcal{V}(e)\cap
\mathcal{V}(e')}\mathcal{B}_{e,v}\mathcal{B}_{e',v}=\sum_{v\in \mathcal{V}(e)}\mathcal{B}_{e,v}^2=2 \;\; \mbox{and}\;\sum_{e,e' \in \vartriangle
}\mathcal{C}_{\vartriangle,e}\mathcal{C}_{\vartriangle,e'}=\sum_{e \in \vartriangle}\mathcal{C}_{\vartriangle,e}^2=\triangle(e),$$
which results in $\mathcal{H}_{e,e'}=\triangle(e)+2$. If $e\not\sim e'$, then
$$\sum_{v\in \mathcal{V}(e)\cap \mathcal{V}(e')}\mathcal{B}_{e,v}\mathcal{B}_{e',v}=0 \;\; \mbox{and}\;\sum_{e,e' \in \vartriangle}\mathcal{C}_{\vartriangle,e}\mathcal{C}_{\vartriangle,e'}=0,$$
which shows $\mathcal{H}_{e,e'}=0$. It remains to consider the case $e\sim e'$. If $e\vartriangle e'$
and $e\leftrightarrow e'$, say $e,e'\in\vartriangle_0$ and $e\rightarrow e'$, then
$$\sum_{v\in \mathcal{V}(e)\cap \mathcal{V}(e')}\mathcal{B}_{e,v}\mathcal{B}_{e',v}=\mathcal{B}_{e,e^+}\mathcal{B}_{e',{e'}^-}=-1\;\;
\mbox{and}\;\sum_{e,e' \in \vartriangle
}\mathcal{C}_{\vartriangle,e}\mathcal{C}_{\vartriangle,e'}=\mathcal{C}_{\vartriangle_0,e}
\mathcal{C}_{\vartriangle_0,e'}=1$$
 indicating  $\mathcal{H}_{e,e'}=0$. If $e \vartriangle e'$ and $e\overset{\pm}{\sim} e'$, say
$e,e'\in\vartriangle_1$ and $e\overset{+}{\sim}e'$, then
$$\sum_{v\in \mathcal{V}(e)\cap \mathcal{V}(e')}\mathcal{B}_{e,v}\mathcal{B}_{e',v}=\mathcal{B}_{e,e^+}\mathcal{B}_{e',{e'}^+}=1\;\;
\mbox{and}\sum_{e,e' \in \vartriangle
}\mathcal{C}_{\vartriangle,e}\mathcal{C}_{\vartriangle,e'}=\mathcal{C}_{\vartriangle_1,e}
\mathcal{C}_{\vartriangle_1,e'}=-1,$$ which yields
$\mathcal{H}_{e,e'}=0$. If $e\not\vartriangle e'$ and $e \leftrightarrow e'$, say $e\rightarrow e'$,
then $$\sum_{v\in \mathcal{V}(e)\cap \mathcal{V}(e')}\mathcal{B}_{e,v}\mathcal{B}_{e',v}=\mathcal{B}_{e,e^+}\mathcal{B}_{e',{e'}^-}=-1\;\; \mbox{and}\sum_{e,e' \in \vartriangle}\mathcal{C}_{\vartriangle,e}\mathcal{C}_{\vartriangle,e'}=0,$$ which arrives at
$\mathcal{H}_{e,e'}=-1$. If $e\not\vartriangle e'$ and $e\overset{\pm}{\sim} e'$, say
$e\overset{+}{\sim}e'$, then
$$\sum_{v\in \mathcal{V}(e)\cap \mathcal{V}(e')}\mathcal{B}_{e,v}\mathcal{B}_{e',v}=\mathcal{B}_{e,e^+}\mathcal{B}_{e',{e'}^+}=1\;\; \mbox{and}\;
\sum_{e,e' \in \vartriangle}\mathcal{C}_{\vartriangle,e}\mathcal{C}_{\vartriangle,e'}=0$$ implying
$\mathcal{H}_{e,e'}=1$.

The proof is completed.
\end{proof}

\begin{example}\label{ex-0}
In Theorem \ref{H-def}, we determine the graph matrix representation of graph Helmholtzian. For the graph $G$ in Fig. \ref{fig-1}, by Theorem \ref{H-def} one can verify that
\[\mathcal{H}(G)=\left(\begin{array}{cccccc}
2&1&-1&0&0&1\\
1&3&-1&0&0&0\\
-1&-1&3&0&0&0\\
0&0&0&3&1&0\\
0&0&0&1&3&0\\
1&0&0&0&0&4
\end{array}\right),\]
which is just $\mathcal{B}(G)\mathcal{B}(G)^{\top} +\mathcal{C}(G)^{\top}\mathcal{C}(G)$.
\end{example}


\section{Vector Laplace Equation on Graphs}\label{H47}

Clearly, the graph-theoretic analogue of vector Laplace equation \eqref{vector-F} is
\begin{equation}\label{vector-F-G}
\mathcal{H}(G) {\bf x} = {\bf 0}.
\end{equation}
Hereafter, we determine the dimension $\dim{\mathcal{V}_0}$ of solution space $\mathcal{V}_0$ of \eqref{vector-F-G}. Since $\mathcal{H}(G)$ is diagonalizable, then $\dim{\mathcal{V}_0}$ is equal to the algebraic multiplicity of eigenvalues $0$, which is said to be {\it nullity} of a graph $G$. Denoted by  $\eta_M(G)$ the {\it nullity} of a graph $G$ with respect to a graph matrix $M(G)$. For the adjacency matrix $A(G)$, Collatz and Sinogowitz \cite{col-sin} first posed the problem of characterizing all graphs with $\eta_A(G) >0$. This question has strong chemical background, because $\eta(G) = 0$ is a necessary condition for a so-called conjugated molecule to be chemically stable, where $G$ is the graph representing the carbon-atom skeleton of this molecule. For the Laplacian matrix $L(G)$, it is well-known that the nullity of a graph is exactly the number of its connected components.


We next determine $\eta_{\mathcal{H}}(G)$ involving the order, the size and the number of triangles of $G$.

\begin{lem}\label{lem-a-1}
Let $G$ be a graph with size $m(G)$. Then
\[\eta_{\mathcal{H}}(G)=m(G)-{\rm rank}\left(\begin{array}{c}\mathcal{B}(G)^{\top}\\\mathcal{C}(G)\end{array}\right),\]
where $\mathcal{B}(G)$ and $\mathcal{C}(G)$ are defined in \eqref{B-C}.
\end{lem}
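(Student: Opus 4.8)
The plan is to recognize the Helmholtzian as a Gram matrix. By \eqref{H=B+C} we have $\mathcal{H}(G)=\mathcal{B}\mathcal{B}^{\top}+\mathcal{C}^{\top}\mathcal{C}$. Stacking $\mathcal{B}^{\top}$ (of size $n\times m$) on top of $\mathcal{C}$ (of size $t\times m$) into the single $(n+t)\times m$ matrix $M=\bigl(\begin{smallmatrix}\mathcal{B}^{\top}\\ \mathcal{C}\end{smallmatrix}\bigr)$, a block multiplication gives
\[
M^{\top}M=\bigl(\mathcal{B}\;\;\mathcal{C}^{\top}\bigr)\binom{\mathcal{B}^{\top}}{\mathcal{C}}=\mathcal{B}\mathcal{B}^{\top}+\mathcal{C}^{\top}\mathcal{C}=\mathcal{H}(G).
\]
Since $\mathcal{H}(G)$ is an $m\times m$ matrix and is diagonalizable (indeed symmetric), $\eta_{\mathcal{H}}(G)=\dim\ker\mathcal{H}(G)=m-{\rm rank}\,\mathcal{H}(G)$. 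Hence the entire statement reduces to proving ${\rm rank}\,\mathcal{H}(G)={\rm rank}\,M$, i.e.\ that forming the Gram matrix $M^{\top}M$ does not change the rank.

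The key step I would carry out is the null-space identity ${\rm ker}(M^{\top}M)={\rm ker}\,M$, valid for any real matrix $M$. One inclusion is immediate: $Mx=0$ forces $M^{\top}Mx=0$. For the reverse, if $M^{\top}Mx=0$ then $0=x^{\top}M^{\top}Mx=\|Mx\|^2$, so $Mx=0$. This is the only place where the real (positive semidefinite) structure is used, and it is exactly what makes $M^{\top}M$ and $M$ share a kernel; over a general field the claim can fail, so this step deserves care and I expect it to be the conceptual crux, even though it is short.

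With the kernels identified, I would finish by the rank--nullity theorem applied to the two maps, both having $m$ columns: ${\rm rank}\,\mathcal{H}(G)={\rm rank}(M^{\top}M)=m-\dim\ker(M^{\top}M)=m-\dim\ker M={\rm rank}\,M$. Combining this with $\eta_{\mathcal{H}}(G)=m-{\rm rank}\,\mathcal{H}(G)$ yields
\[
\eta_{\mathcal{H}}(G)=m-{\rm rank}\,M=m(G)-{\rm rank}\binom{\mathcal{B}(G)^{\top}}{\mathcal{C}(G)},
\]
as desired. Everything except the null-space identity is bookkeeping of matrix dimensions, so the main obstacle is simply justifying that passage from $M$ to $M^{\top}M=\mathcal{H}(G)$ preserves rank.
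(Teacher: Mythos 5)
Your proposal is correct and takes essentially the same route as the paper: the paper's proof likewise observes that $\mathcal{H}(G)\mathbf{x}=(\mathcal{B}\mathcal{B}^{\top}+\mathcal{C}^{\top}\mathcal{C})\mathbf{x}=\mathbf{0}$ if and only if $\mathcal{B}^{\top}\mathbf{x}=\mathbf{0}$ and $\mathcal{C}\mathbf{x}=\mathbf{0}$ --- which is exactly the positive-semidefinite norm argument you isolate as $\ker(M^{\top}M)=\ker M$ --- and then concludes by rank--nullity. Your Gram-matrix packaging $\mathcal{H}(G)=M^{\top}M$ merely makes that key step explicit.
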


\begin{proof}
Given an arbitrary orientation to $E(G)$ and $T(G)$, by \eqref{H=B+C} we get that $$\mathcal{H}(G){\rm \bf x}=(\mathcal{B}(G)\mathcal{B}(G)^{\top}+\mathcal{C}(G)^{\top}\mathcal{C}(G)){\rm \bf x}=\mathbf{0}$$ if and only if $\mathcal{C}(G){\rm \bf x}=\mathbf{0}$ and $\mathcal{B}(G)^{\top}{\rm \bf x}=\mathbf{0}$, which is equivalent to $$\left(\begin{array}{c}B(G)^{\top}\\\mathcal{C}(G)\end{array}\right){\rm \bf x}=\mathbf{0}.$$ Thereby, $\eta_{\mathcal{H}}(G)=m(G)-{\rm rank}\left(\begin{array}{c}\mathcal{B}(G)^{\top}\\\mathcal{C}(G)\end{array}\right)$.
\end{proof}

Note that ${\rm rank}(\mathcal{B}^{\top}(G))={\rm rank}(\mathcal{B}(G))$ and ${\rm rank}(\mathcal{C}(G))=t_G(\triangle)$, the number of triangles in $G$. Thus we get the following result immediately.

\begin{cor}\label{cor-a-1}
Let $G$ be a graph with arbitrary orientations on $E(G)$ and $T(G)$. Then
\[\eta_{\mathcal{H}}(G)=m(G)-t_G(\triangle)-{\rm rank}(\mathcal{B}(G)).\]
\end{cor}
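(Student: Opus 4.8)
The plan is to read the formula off Lemma~\ref{lem-a-1}, so the whole task reduces to evaluating ${\rm rank}\bigl(\begin{smallmatrix}\mathcal{B}(G)^{\top}\\\mathcal{C}(G)\end{smallmatrix}\bigr)$ and checking that it equals $t_G(\triangle)+{\rm rank}(\mathcal{B}(G))$. First I would record the routine identity ${\rm rank}(\mathcal{B}^{\top})={\rm rank}(\mathcal{B})$, which holds for any matrix.

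The substantive point is that the two stacked blocks contribute \emph{independently} to the rank, i.e.
\[
{\rm rank}\begin{pmatrix}\mathcal{B}^{\top}\\\mathcal{C}\end{pmatrix}={\rm rank}(\mathcal{B}^{\top})+{\rm rank}(\mathcal{C}).
\]
This needs a justification, since a priori the rank of a stacked matrix is only \emph{at most} the sum of the ranks. The natural justification is the fundamental relation $\operatorname{curl}\operatorname{grad}=0$: for any $\phi$ one has $(\operatorname{curl}\operatorname{grad}\phi)(i,j,k)=(\phi(j)-\phi(i))+(\phi(k)-\phi(j))+(\phi(i)-\phi(k))=0$, so by Lemma~\ref{lem-1} (identifying $\operatorname{grad}$ with $\mathcal{B}$ and $\operatorname{curl}$ with $\mathcal{C}$) we obtain $\mathcal{C}\mathcal{B}=0$. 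Hence the row space of $\mathcal{B}^{\top}$, namely the column space of $\mathcal{B}$, lies in $\ker\mathcal{C}$ and is therefore orthogonal to the row space of $\mathcal{C}$, namely the column space of $\mathcal{C}^{\top}$, which equals $(\ker\mathcal{C})^{\perp}$. Two orthogonal subspaces meet only in $\mathbf{0}$, so the dimension of their sum is the sum of their dimensions, which is exactly the displayed identity. Substituting into Lemma~\ref{lem-a-1} then gives the cleanly correct formula $\eta_{\mathcal{H}}(G)=m(G)-{\rm rank}(\mathcal{B}(G))-{\rm rank}(\mathcal{C}(G))$.

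The last step -- and the one I expect to be the main obstacle -- is to replace ${\rm rank}(\mathcal{C})$ by $t_G(\triangle)$, i.e. to argue that $\mathcal{C}$ has full row rank. Here I would be cautious, because this asserts that the triangle-edge incidence rows are linearly independent, equivalently that the $2$-skeleton carries no nonzero $2$-cycle, and that can fail. The smallest instance is $K_4$: its four triangle-rows satisfy the single relation given by the oriented tetrahedron boundary, so that ${\rm rank}(\mathcal{C})=3<4=t_G(\triangle)$ there (and the formula would spuriously give a negative nullity). Thus the reduction ${\rm rank}(\mathcal{C})=t_G(\triangle)$ genuinely requires a triangle-acyclicity hypothesis on $G$, and in its absence one should keep ${\rm rank}(\mathcal{C})$ in place of $t_G(\triangle)$.
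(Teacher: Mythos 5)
Your proposal is mathematically sound, and the obstacle you flag at the last step is real: Corollary \ref{cor-a-1} as stated is false, and the flaw lies in the paper itself. The paper's entire derivation from Lemma \ref{lem-a-1} is the remark that ${\rm rank}(\mathcal{B}^{\top})={\rm rank}(\mathcal{B})$ and ${\rm rank}(\mathcal{C})=t_G(\triangle)$, after which the result is declared immediate; it silently assumes that the ranks of the two stacked blocks add, and it asserts full row rank of $\mathcal{C}$ with no justification. Your treatment of the first point fills a genuine omission, and it is the right argument: $\operatorname{curl}\operatorname{grad}=0$ gives $\mathcal{C}\mathcal{B}=0$, hence the column space of $\mathcal{B}$ lies in $\ker\mathcal{C}$, which is orthogonal to the row space of $\mathcal{C}$, so the row spaces of $\mathcal{B}^{\top}$ and $\mathcal{C}$ meet trivially and the ranks add. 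Your objection to the second point is also correct: ${\rm rank}(\mathcal{C})=t_G(\triangle)$ holds if and only if the $2$-dimensional clique complex of $G$ carries no nonzero $2$-cycle, and $K_4$ is the minimal counterexample, since the signed sum of its four triangle rows vanishes (it is the boundary relation of the tetrahedral sphere), giving ${\rm rank}(\mathcal{C}(K_4))=3<4$. For $K_4$ the true nullity is $m-{\rm rank}(\mathcal{B})-{\rm rank}(\mathcal{C})=6-3-3=0$, consistent with $\ker\Lambda\cong H_1$ of the clique complex, whereas Corollary \ref{cor-a-1} would give $6-4-3=-1$. The error propagates: Lemma \ref{lem-4} and the induction in Theorem \ref{thm-1} rest on this corollary, and Theorem \ref{thm-1} outputs the impossible value $m-n-t_G(\triangle)+\omega=-1$ for $K_4$, so Theorem \ref{thm-1} and Proposition \ref{thm-cor} inherit the same defect.

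The unconditional statement your argument actually proves is
\[
\eta_{\mathcal{H}}(G)=m(G)-{\rm rank}(\mathcal{B}(G))-{\rm rank}(\mathcal{C}(G))
=m(G)-n(G)+\omega(G)-t_G(\triangle)+\dim\ker\bigl(\mathcal{C}(G)^{\top}\bigr),
\]
where $\dim\ker(\mathcal{C}^{\top})$ counts independent $2$-cycles; this agrees with the Helmholtz Decomposition Theorem quoted in the paper's final section, which gives $m={\rm rank}(\operatorname{grad})+\dim\ker(\Lambda)+{\rm rank}(\operatorname{curl}^{*})$ directly. The paper's formula is recovered exactly under your triangle-acyclicity hypothesis, and you are right to insist on it as a hypothesis rather than a fact: it is strictly stronger than excluding $K_4$, since for instance the octahedron $K_{2,2,2}$ is $K_4$-free yet its eight triangles triangulate a sphere, so there $\mathcal{C}$ again fails to have full row rank and the paper's formula again returns $-1$.
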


\begin{lem}\label{lem-3}
If $G$ has a pendent vertex $v$ and $G'=G-v$, then $\eta_{\mathcal{H}}(G)=\eta_{\mathcal{H}}(G')$.
\end{lem}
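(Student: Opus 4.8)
The plan is to reduce everything to the rank formula of Corollary \ref{cor-a-1} and then to track how each of the three quantities $m$, $t_G(\triangle)$ and ${\rm rank}(\mathcal{B})$ changes when the pendent vertex is deleted. Throughout, write $e_0=\{u,v\}$ for the unique edge incident to $v$; since $\deg(v)=1$, the vertex $v$, and hence the edge $e_0$, lies in no triangle of $G$.

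First I would dispose of the easy bookkeeping. Deleting $v$ removes exactly the edge $e_0$ and nothing else, so $m(G)=m(G')+1$. Because $v$ and $e_0$ belong to no triangle, the triangle sets of $G$ and $G'$ coincide, whence $t_G(\triangle)=t_{G'}(\triangle)$. Thus, by Corollary \ref{cor-a-1}, the desired equality $\eta_{\mathcal{H}}(G)=\eta_{\mathcal{H}}(G')$ is equivalent to the single rank identity ${\rm rank}(\mathcal{B}(G))={\rm rank}(\mathcal{B}(G'))+1$.

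The heart of the argument is this rank identity, and it is where I expect the only real content to lie. I would keep the orientation of every edge of $G'$ unchanged inside $G$, so that $\mathcal{B}(G')$ is obtained from $\mathcal{B}(G)$ by deleting the row indexed by $e_0$ and the column indexed by $v$. The key structural fact is that $\deg(v)=1$: the column of $\mathcal{B}(G)$ indexed by $v$ has a single nonzero entry $\pm 1$, lying in the row $e_0$. Consequently every row of $\mathcal{B}(G)$ other than $e_0$ has a zero in the $v$-coordinate, and these rows, after dropping that coordinate, are exactly the rows of $\mathcal{B}(G')$; hence they span a subspace of dimension ${\rm rank}(\mathcal{B}(G'))$ lying inside the hyperplane $\{x_v=0\}$. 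The row $e_0$ has a nonzero $v$-coordinate and therefore cannot lie in that span, so adjoining it raises the rank by exactly one, giving ${\rm rank}(\mathcal{B}(G))={\rm rank}(\mathcal{B}(G'))+1$.

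Combining the three observations through Corollary \ref{cor-a-1} then yields $\eta_{\mathcal{H}}(G)=(m(G')+1)-t_{G'}(\triangle)-({\rm rank}(\mathcal{B}(G'))+1)=\eta_{\mathcal{H}}(G')$, as claimed. The rank step is the only point needing care; as an alternative I could instead invoke the classical identity ${\rm rank}(\mathcal{B}(H))=n(H)-c(H)$ (number of vertices minus number of connected components) and observe that deleting a pendent vertex decreases $n$ by one while leaving $c$ unchanged, which delivers the same rank identity and hence the same conclusion.
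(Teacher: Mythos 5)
Your proof is correct and follows essentially the same route as the paper: both reduce the claim via Corollary \ref{cor-a-1} to the three facts $m(G)=m(G')+1$, $t_G(\triangle)=t_{G'}(\triangle)$, and ${\rm rank}(\mathcal{B}(G))={\rm rank}(\mathcal{B}(G'))+1$. The only difference is that the paper simply asserts the rank identity, while you justify it carefully (via the single nonzero entry in the column of $v$, or alternatively via ${\rm rank}(\mathcal{B}(H))=n(H)-c(H)$), which is a welcome addition rather than a deviation.
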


\begin{proof}
Given arbitrary orientations to $E(G)$ and $T(G)$, we get $m(G')=m(G)-1$, ${\rm rank}(\mathcal{B}(G'))={\rm rank}(\mathcal{B}(G))-1$ and $t_{G'}(\triangle)=t_G(\triangle)$. Hence, the result follows from Corollary \ref{cor-a-1}.
\end{proof}

For a vertex $v$ of a graph $G$, let $N_G(v)$ denote the set of the neighbours of $v$. If $e=uv$ such that $N_G(u)\cap N_G(v) = \emptyset$, the {\it contraction} of $e$ is the replacement of $u$ and $v$ with a single vertex whose incident edges are the edges other than $e$ that are incident to $u$ or $v$, and the resulting graph is denoted by $G/e$.

\begin{lem}\label{lem-4}
Let $G$ be a graph with edge $e=uv$ such that $N(u)\cap N(v)=\emptyset$. If $G'=G/e$, then $$\eta_{\mathcal{H}}(G)=\eta_{\mathcal{H}}(G')+\triangle_{G'}(e).$$
\end{lem}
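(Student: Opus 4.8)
The plan is to reduce everything to Corollary \ref{cor-a-1}, which gives $\eta_{\mathcal{H}}(H)=m(H)-t_H(\triangle)-{\rm rank}(\mathcal{B}(H))$ for any graph $H$, and then to track how each of the three quantities $m$, $t(\triangle)$ and ${\rm rank}(\mathcal{B})$ changes when passing from $G$ to $G'=G/e$. This parallels the proof of Lemma \ref{lem-3}, except that here the triangle count is no longer inert. Write $w$ for the vertex of $G'$ obtained by identifying the endpoints $u,v$ of $e$. First I would record two structural consequences of the hypothesis $N_G(u)\cap N_G(v)=\emptyset$: no vertex is adjacent to both $u$ and $v$, so (i) contracting $e$ creates neither a loop (which would require the edge $uv$, now deleted) nor a multi-edge (which would require a common neighbour), whence $G'$ is again simple and Corollary \ref{cor-a-1} applies to it; and (ii) the edge $e$ itself lies in no triangle of $G$, since a triangle on $e$ would need a vertex in $N_G(u)\cap N_G(v)=\emptyset$.

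Next I would dispatch the two easy quantities. Since the only edge removed by the contraction is $e$ and no two surviving edges are identified (again because $u,v$ share no neighbour), we get $m(G')=m(G)-1$. For the rank I would invoke the standard fact that the oriented edge-vertex incidence matrix of a graph has rank equal to its number of vertices minus its number of connected components; as $u$ and $v$ lie in the same component, contracting $e$ keeps the number of components fixed while decreasing the number of vertices by one, so ${\rm rank}(\mathcal{B}(G'))={\rm rank}(\mathcal{B}(G))-1$. Alternatively this follows by deleting the row of $\mathcal{B}(G)$ indexed by $e$ and replacing the two columns indexed by $u,v$ with their sum, which is a valid column operation here precisely because every edge other than $e$ meets at most one of $u,v$, and it lowers the rank by exactly one.

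The crux is the triangle count, where I would prove $t_{G'}(\triangle)=t_G(\triangle)+\triangle_{G'}(e)$, interpreting $\triangle_{G'}(e)$ as the number of triangles newly created at $w$, namely those $\{w,a,b\}$ with $a\in N_G(u)$, $b\in N_G(v)$ and $ab\in E(G)$. The bookkeeping splits into three claims: because $e$ lies in no triangle, no triangle of $G$ is destroyed; renaming $u,v$ to $w$ sends the triangles of $G$ \emph{injectively} to triangles of $G'$, where injectivity again uses $N_G(u)\cap N_G(v)=\emptyset$ to forbid distinct triangles $\{u,a,b\}$ and $\{v,a,b\}$ from collapsing onto a common $\{w,a,b\}$; and every triangle of $G'$ outside this image must consist of $w$ together with a neighbour $a$ of $u$ and a neighbour $b$ of $v$ with $ab\in E(G)$, so these are exactly the newly created triangles counted by $\triangle_{G'}(e)$ and they overlap none of the inherited ones. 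Substituting the three relations into Corollary \ref{cor-a-1} for $G$ and for $G'$ and subtracting then yields $\eta_{\mathcal{H}}(G)=\eta_{\mathcal{H}}(G')+\triangle_{G'}(e)$. I expect the main obstacle to be exactly this triangle accounting: verifying that nothing collapses and that the new triangles are enumerated without double-counting, all of which hinge on the disjoint-neighbourhood hypothesis, whereas the edge and rank computations are routine.
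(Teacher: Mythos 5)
Your proof is correct and follows the same overall strategy as the paper's: both reduce to Corollary \ref{cor-a-1} and track how $m$, $t(\triangle)$ and ${\rm rank}(\mathcal{B})$ change under the contraction, but you handle the two substantive steps differently. For the rank, the paper orients every edge at $u$ and $v$ suitably, writes $\mathcal{B}(G)^{\top}$ and $\mathcal{B}(G')^{\top}$ in block form, and runs a two-case row-dependence argument to show the rank drops by exactly one; you instead invoke ${\rm rank}(\mathcal{B}(H))=n(H)-\omega(H)$ together with the observation that contraction preserves the number of components, which is shorter and loses nothing, since the paper itself uses that rank formula in the base case of Theorem \ref{thm-1} (your alternative column-operation argument is essentially the paper's block computation in disguise). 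For the triangle count, your treatment is in fact more careful than the paper's: the paper asserts ``clearly'' that $t_{G'}(\triangle)=t_G(\triangle)-\triangle_G(e)$, which as literally written is garbled---under the hypothesis $N(u)\cap N(v)=\emptyset$ the edge $e$ lies in no triangle of $G$, so $\triangle_G(e)=0$ and that display would wrongly say the contraction changes no triangles. The correct relation is the one you prove, $t_{G'}(\triangle)=t_G(\triangle)+\triangle_{G'}(e)$, with $\triangle_{G'}(e)$ read as the number of triangles created at the merged vertex $w$; your bookkeeping (no triangle destroyed since $e$ lies in none, old triangles map injectively because distinct triangles $\{u,a,b\}$ and $\{v,a,b\}$ would force a common neighbour, and the remaining triangles of $G'$ are exactly the mixed ones $\{w,a,b\}$ with $a\in N_G(u)$, $b\in N_G(v)$, $ab\in E(G)$) supplies precisely the verification the paper skips. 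Your explicit interpretation of the ambiguous symbol $\triangle_{G'}(e)$---which cannot literally be a triangle degree, as $e\notin E(G')$---is also the right one: it matches the lemma's sole application in the proof of Theorem \ref{thm-1}, where the contraction inverts a subdivision and the newly created triangles are exactly the triangles of $G'$ containing the edge $e$.
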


\begin{proof}
Give orientations to $E(G)$ and $T(G)$ such that $u$  and $v$ are respectively the tail and  the head of any edge incident to it. Clearly, $m(G') = m(G)-1$ and $t_{G'}(\triangle)=t_G(\triangle)-\triangle_G(e)$. It suffices to show that ${\rm rank}(\mathcal{B}(G))={\rm rank}(\mathcal{B}(G'))+1$ by Corollary \ref{cor-a-1}. For any vertex $z$, let $E_z=\{e\mid z\in \mathcal{V}(e)\}$. Denote by $E_1=E(G)\setminus(E_u\cup E_v)$, $E_2=E_u\setminus\{e\}$, $E_3=E_v\setminus\{e\}$, and $u'$ the vertex in $G'$ that replaces $u$ and $v$ in $G$. Then
\[
\mathcal{B}(G)^{\top}=\!\!\!\!\!\!\!\begin{array}{cc}
&\begin{array}{cccc}
E_1&E_2&E_3&\{e\}
\end{array}\\
\begin{array}{c}
u\\v\\ V\setminus\{u,v\}
\end{array}&
\left(\begin{array}{cccc}
\mathbf{0}&-\mathbf{1}^{\top}&\mathbf{0}&-1\\
\mathbf{0}&\mathbf{0}&\mathbf{1}^{\top}&1\\
B_1&B_2&B_3&\mathbf{0}
\end{array}\right)
\end{array} \;\; {\mbox{and}} \;\;
\mathcal{B}(G')^{\top}=\!\!\!\!\!\!\begin{array}{cc}
&\begin{array}{ccc}
E_1&E_2&E_3
\end{array}\\
\begin{array}{c}
u'\\ V'\setminus\{u'\}
\end{array}&
\left(\begin{array}{ccc}
\mathbf{0}&-\mathbf{1}^{\top}&\mathbf{1}\\
B_1&B_2&B_3
\end{array}\right)
\end{array}.
\]

If the $u'$-th row of $B(G')^{\top}$ could be represented by the other rows, then so could the summation of the $u$-th row and the $v$-th row of $B(G)^{\top}$, and thus $${\rm rank}(\mathcal{B}(G'))={\rm rank}(B_1\; B_2\; B_3) \;\; {\mbox{and}} \;\; {\rm rank}(\mathcal{B}(G))={\rm rank}(B_1\; B_2\; B_3)+1.$$ Hence, ${\rm rank}(\mathcal{B}(G))= {\rm rank}(\mathcal{B}(G'))+1$. If the $u'$-th row cannot be represented by the other rows, then $${\rm rank}(\mathcal{B}(G'))= {\rm rank}(B_1\; B_2\; B_3)+1\;\; {\mbox{and}} \;\; {\rm rank}(\mathcal{B}(G))= {\rm rank}(B_1\; B_2\; B_3)+2,$$ and we still have ${\rm rank}(\mathcal{B}(G))= {\rm rank}(\mathcal{B}(G'))+1$.
\end{proof}

The following result follows from the above lemma.

\begin{cor}\label{cor-1}
Let $G$ be a graph with a cut-edge $e$. Then $\eta_{\mathcal{H}}(G)=\eta_{\mathcal{H}}(G/e)$.
\end{cor}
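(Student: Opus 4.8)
The plan is to obtain the corollary as a direct consequence of Lemma \ref{lem-4}. First I would verify that the hypothesis of that lemma is satisfied. Since $e=uv$ is a cut-edge, it lies on no cycle of $G$; in particular $u$ and $v$ have no common neighbour, for a common neighbour $w$ would create the triangle on $u,w,v$, a cycle through $e$. Hence $N(u)\cap N(v)=\emptyset$, the contraction $G/e$ is well defined, and Lemma \ref{lem-4} yields $\eta_{\mathcal{H}}(G)=\eta_{\mathcal{H}}(G/e)+\triangle_{G'}(e)$ with $G'=G/e$.

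It then remains to check that the correction term $\triangle_{G'}(e)$ is zero. Here I would invoke the defining property of a cut-edge: deleting $e$ splits $G$ into two components, one containing $u$ and the other containing $v$. Consequently every neighbour of $u$ apart from $v$ lies on $u$'s side, every neighbour of $v$ apart from $u$ lies on $v$'s side, and no edge joins the two sides. Therefore the contraction can neither merge a neighbour of $u$ with an adjacent neighbour of $v$ into a new triangle at the identified vertex, nor destroy a triangle (as $e$ is in none); no triangle is created or lost, so the term $\triangle_{G'}(e)$ recording these contributions equals $0$.

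Substituting $\triangle_{G'}(e)=0$ into the formula gives $\eta_{\mathcal{H}}(G)=\eta_{\mathcal{H}}(G/e)$, which is the assertion. Equivalently, one could argue straight from Corollary \ref{cor-a-1}: here $m(G/e)=m(G)-1$, ${\rm rank}(\mathcal{B}(G/e))={\rm rank}(\mathcal{B}(G))-1$, and $t_{G/e}(\triangle)=t_G(\triangle)$, so the three changes cancel in the expression $\eta_{\mathcal{H}}(G)=m(G)-t_G(\triangle)-{\rm rank}(\mathcal{B}(G))$. The only delicate point, and the step I would justify most carefully, is the vanishing of the correction term, namely that contracting a cut-edge creates no new triangle; this is exactly where the separation property of the cut-edge enters, and everything else is a direct application of Lemma \ref{lem-4}.
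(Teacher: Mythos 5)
Your proof is correct and takes essentially the same route as the paper, which obtains the corollary directly from Lemma \ref{lem-4}: a cut-edge lies on no cycle, so $N(u)\cap N(v)=\emptyset$ and the correction term $\triangle_{G'}(e)$ vanishes, since any new triangle at the identified vertex would require an edge joining the two components of $G-e$. The paper leaves these two verifications implicit (``follows from the above lemma''), and your spelled-out justification of them is exactly the intended argument.
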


The next result follows from the fact that each edge of a tree is a cut-egde and using Corollary \ref{cor-1} repeatedly.

\begin{cor}\label{cor-2}
Let $\mathcal{T}$ be a tree. Then, $\eta_{\mathcal{H}}(\mathcal{T})=0$ and so $\mathcal{H}(\mathcal{T})$ is an invertible matrix.
\end{cor}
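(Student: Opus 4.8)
The plan is to argue by induction on the order $n = |V(\mathcal{T})|$, using Corollary \ref{cor-1} as the engine. First I would record two structural facts about trees that make Corollary \ref{cor-1} applicable to \emph{every} edge. Every edge of a tree is a cut-edge, since deleting any edge disconnects the tree; hence the hypothesis of Corollary \ref{cor-1} is met by any $e \in E(\mathcal{T})$. Moreover, contracting such an edge yields a graph $\mathcal{T}/e$ that is again a tree on $n-1$ vertices: contraction preserves connectivity, and because a cut-edge lies in no cycle (in particular $N(u)\cap N(v)=\emptyset$ for $e=uv$) the operation introduces neither a multi-edge nor a new cycle.

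For the base case I would take $\mathcal{T}$ to be a single vertex ($n=1$): here $E(\mathcal{T}) = \emptyset$, so $\mathcal{H}(\mathcal{T})$ is the empty matrix and $\eta_{\mathcal{H}}(\mathcal{T}) = 0$ trivially. For the inductive step, given a tree on $n \geq 2$ vertices I would pick any edge $e$, invoke Corollary \ref{cor-1} to obtain $\eta_{\mathcal{H}}(\mathcal{T}) = \eta_{\mathcal{H}}(\mathcal{T}/e)$, and then apply the induction hypothesis to the smaller tree $\mathcal{T}/e$ to conclude $\eta_{\mathcal{H}}(\mathcal{T}/e) = 0$. Chaining these equalities gives $\eta_{\mathcal{H}}(\mathcal{T}) = 0$.

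An even shorter route, which I would at least mention as a cross-check, bypasses the induction entirely via Corollary \ref{cor-a-1}: a tree on $n$ vertices has $m(\mathcal{T}) = n-1$ edges and no triangles, so $t_{\mathcal{T}}(\triangle) = 0$, while its oriented incidence matrix $\mathcal{B}(\mathcal{T})$ has rank $n-1$ because $\mathcal{T}$ is connected. Substituting into $\eta_{\mathcal{H}}(\mathcal{T}) = m(\mathcal{T}) - t_{\mathcal{T}}(\triangle) - \mathrm{rank}(\mathcal{B}(\mathcal{T}))$ yields $(n-1) - 0 - (n-1) = 0$.

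Finally, for the invertibility claim I would note that $\mathcal{H}(\mathcal{T}) = \mathcal{B}\mathcal{B}^{\top} + \mathcal{C}^{\top}\mathcal{C}$ from \eqref{H=B+C} is symmetric and positive semidefinite, so its nullity equals the multiplicity of the eigenvalue $0$; since $\eta_{\mathcal{H}}(\mathcal{T}) = 0$, the matrix has no zero eigenvalue and is therefore invertible (indeed positive definite). As for obstacles, there is essentially none of substance: the only points that need a line of care are confirming that edge contraction keeps us inside the class of trees, so that the induction closes, and disposing of the trivial base case. The real content has already been front-loaded into Lemma \ref{lem-4} and Corollary \ref{cor-1}.
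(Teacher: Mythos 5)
Your argument is correct and is essentially the paper's own proof: the paper derives Corollary \ref{cor-2} by noting that every edge of a tree is a cut-edge and applying Corollary \ref{cor-1} repeatedly, which is exactly your induction on $n$ made explicit (your cross-check via Corollary \ref{cor-a-1} is also valid, but supplementary). Nothing is missing; the base case, the closure of the tree class under contraction of an edge, and the step from $\eta_{\mathcal{H}}(\mathcal{T})=0$ to invertibility are all handled properly.
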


Our main result in this subsection is shown as follows.

\begin{thm}\label{thm-1}
Let $G$ be a graph with $\omega(G)$ components. Then $$\eta_{\mathcal{H}}(G)=m(G)-n(G)-t_G(\triangle)+\omega(G).$$
\end{thm}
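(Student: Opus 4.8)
The plan is to reduce everything to Corollary~\ref{cor-a-1}, which already gives $\eta_{\mathcal{H}}(G)=m(G)-t_G(\triangle)-{\rm rank}(\mathcal{B}(G))$. Comparing this expression with the target identity, the sole remaining task is to establish the classical rank formula ${\rm rank}(\mathcal{B}(G))=n(G)-\omega(G)$ for the oriented edge-vertex incidence matrix.

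To obtain this rank formula, I would view $\mathcal{B}(G)$ as a linear map sending a vertex function $\mathbf{z}\in\mathbb{R}^{n}$ to the edge vector whose $e$-th coordinate is, by the definition \eqref{B-C}, exactly $\mathbf{z}(e^{+})-\mathbf{z}(e^{-})$. Consequently $\mathcal{B}(G)\mathbf{z}=\mathbf{0}$ holds precisely when $\mathbf{z}$ assigns equal values to the two endpoints of every edge, i.e.\ when $\mathbf{z}$ is constant on each connected component of $G$. The space of such functions has one degree of freedom per component, so its dimension is $\omega(G)$, and rank-nullity then yields ${\rm rank}(\mathcal{B}(G))=n(G)-\omega(G)$.

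Substituting this into Corollary~\ref{cor-a-1} gives $\eta_{\mathcal{H}}(G)=m(G)-t_G(\triangle)-\bigl(n(G)-\omega(G)\bigr)=m(G)-n(G)-t_G(\triangle)+\omega(G)$, as claimed. I expect no serious obstacle here, since the statement is essentially a direct corollary of the earlier reduction. The only point worth checking is the orientation-independence of ${\rm rank}(\mathcal{B}(G))$, but this is automatic: the kernel description above---equality of $\mathbf{z}$ across each edge---is symmetric in the two endpoints and hence does not refer to the chosen orientation at all. Alternatively, one could invoke $\mathcal{B}(G)^{\top}\mathcal{B}(G)=L(G)$ together with the well-known fact, recalled in the Introduction, that the nullity of the Laplacian equals the number of components, so that ${\rm rank}(\mathcal{B}(G))={\rm rank}(L(G))=n(G)-\omega(G)$.
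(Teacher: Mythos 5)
Your proof is correct and takes a genuinely different---and shorter---route than the paper's. The paper proves Theorem~\ref{thm-1} by induction on the number of triangles in each component: for an edge $e$ lying in a triangle it subdivides $e$ into a directed path $e^{-}\rightarrow u\rightarrow e^{+}$, applies the contraction identity of Lemma~\ref{lem-4} to relate the nullities of the subdivided graph and the original, and then sums the per-component formula over the $\omega(G)$ components; its base case $t_{G_i}(\triangle)=0$ invokes exactly the rank identity you use, namely ${\rm rank}(\mathcal{B}(G_i))={\rm rank}(L(G_i))=n(G_i)-1$ for connected $G_i$. Your observation is that the whole induction can be bypassed: Corollary~\ref{cor-a-1} holds for all graphs simultaneously, and substituting the classical fact ${\rm rank}(\mathcal{B}(G))=n(G)-\omega(G)$---which you justify correctly both via the kernel description ($\mathcal{B}\mathbf{z}=\mathbf{0}$ iff $\mathbf{z}$ is constant on each component, so the kernel has dimension $\omega(G)$) and via $\mathcal{B}^{\top}\mathcal{B}=L(G)$---finishes the proof in one line. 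Nothing is lost logically by this shortcut, since the paper's Lemma~\ref{lem-4} is itself proved from Corollary~\ref{cor-a-1}, so both arguments rest on the same foundation; the paper's longer route does, however, produce by-products of independent interest (Lemma~\ref{lem-4} and Corollaries~\ref{cor-1} and~\ref{cor-2} on cut-edges and trees), while yours buys brevity and transparency.

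One caveat that your streamlined argument makes visible, and which you should flag: both proofs hinge on the claim ${\rm rank}(\mathcal{C}(G))=t_G(\triangle)$ used in deriving Corollary~\ref{cor-a-1}, and this fails whenever the triangles of $G$ support a nontrivial $2$-cycle. For $G=K_4$ the four rows of $\mathcal{C}$ satisfy a signed linear relation (the boundary of the tetrahedron), so ${\rm rank}(\mathcal{C})=3<4=t_G(\triangle)$; the theorem's formula would then give $\eta_{\mathcal{H}}(K_4)=6-4-4+1=-1$, whereas the true nullity is $0$ (the clique $2$-complex of $K_4$ is a sphere, with trivial first homology). This is a defect of the statement itself, inherited equally by the paper's inductive proof through Lemma~\ref{lem-4}; it is not a gap specific to your approach, but your substitution argument has the merit of isolating the issue in the single rank hypothesis on $\mathcal{C}$.
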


\begin{proof}
For $1 \leq i \leq \omega(G)$, we consider each component $G_i$ of $G$ by induction on $t_{G_i}(\triangle)$.  If $t_{G_i}(\triangle)=0$, then ${\rm rank}(\mathcal{B}(G_i))={\rm rank}(\mathcal{B}(G_i)^{\top})={\rm rank}(L(G_i))=n_i-1$, because $G_i$ is connected. Therefore, $\eta_{\mathcal{H}}(G_i)=m(G_i)-n(G_i)+1$. Assume that the statement is true for $t_{G_i}(\triangle) \le t$. Assume that $t_{G_i}(\triangle)=t+1$. Taking an edge $e$ belonging to some triangle, let $\tilde{G}_i$ be the graph obtained from $G_i$ by replacing $e$ with an directed path $e^-\rightarrow u\rightarrow e^+$. Clearly, $G_i=\tilde{G_i}/\{e^-,u\}$ and $N_{\tilde{G_i}}(e^-)\cap N_{\tilde{G_i}}(u)=\emptyset$. Along with Lemma \ref{lem-4} we get
\begin{equation}\label{eta1}
\eta_{\mathcal{H}}(\tilde{G}_i)=\eta_{\mathcal{H}}(G_i)+\triangle_{G_i}(e).
\end{equation}
By inductive assumption,
\begin{equation}\label{eta2}
\eta_{\mathcal{H}}(\tilde{G}_i)=m(\tilde{G}_i)-n(\tilde{G}_i)-t(\tilde{G}_i)+1.
\end{equation}
Substituting $m(\tilde{G}_i)=m(G_i)+1$, $n(\tilde{G}_i)=n(G_i)+1$ and $t_{\tilde{G}_i}(\triangle)=t_{G_i}(\triangle)-\triangle_{G_i}(e)$ into \eqref{eta2}, by \eqref{eta1} we arrive at
\[(m(G_i)+1)-(n(G_i)+1)-(t_{G_i}(\triangle)-\triangle_{G_i}(e))+1=\eta_{\mathcal{H}}(\tilde{G}_i)
=\eta_{\mathcal{H}}(G_i)+\Delta_{G_i}(e)\]
resulting in $\eta_{\mathcal{H}}(G_i)=m(G_i)-n(G_i)-t_{G_i}(\triangle)+1$. In consequence,
$$\eta_{\mathcal{H}}(G)=\sum_{i=1}^{\omega(G)}\eta_{\mathcal{H}}(G_i)
=\sum_{i=1}^{\omega(G)}[m(G_i)-n(G_i)-t_{G_i}(\triangle)+1]=m(G)-n(G)-t_G(\triangle)+\omega(G).$$

This completes the proof.
\end{proof}

The following results instantly follows from the above theorem.

\begin{prop}\label{thm-cor}
Under the conditions of Theorem \ref{thm-1},
\begin{itemize}
\item[{\rm (i)}]
$\dim \ker(\Lambda) = \dim \mathcal{V}_{0}  = m(G)-n(G)-t_G(\triangle)+\omega(G)$;
\item[{\rm (ii)}]
$t_G(\triangle) =  m(G)-n(G)-\eta_{\mathcal{H}}(G)+\omega(G)$.
\end{itemize}
\end{prop}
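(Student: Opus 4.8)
The plan is to treat both parts as direct consequences of Theorem \ref{thm-1}, since the entire quantitative content of the proposition is already contained in that identity. The only genuine work is the bookkeeping needed to identify the three a priori distinct quantities appearing in part~(i), namely $\dim\ker(\Lambda)$, $\dim\mathcal{V}_0$, and $\eta_{\mathcal{H}}(G)$.

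First I would establish the chain $\dim\ker(\Lambda)=\dim\mathcal{V}_0=\eta_{\mathcal{H}}(G)$. By Theorem \ref{H-def}, under the orthonormal cochain bases constructed in Lemma \ref{lem-1} the graph Helmholtzian $\Lambda$ is represented by the matrix $\mathcal{H}(G)$; consequently $\dim\ker(\Lambda)=\dim\ker(\mathcal{H}(G))$. The solution space $\mathcal{V}_0$ of the graph vector Laplace equation \eqref{vector-F-G} is by definition the kernel of $\mathcal{H}(G)$, so $\dim\mathcal{V}_0=\dim\ker(\mathcal{H}(G))$ as well. Finally, as recorded just before Lemma \ref{lem-a-1}, the matrix $\mathcal{H}(G)=\mathcal{B}\mathcal{B}^{\top}+\mathcal{C}^{\top}\mathcal{C}$ is symmetric and hence diagonalizable, so the geometric multiplicity of the eigenvalue $0$ agrees with its algebraic multiplicity, which is precisely $\eta_{\mathcal{H}}(G)$. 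With all three quantities identified, substituting the formula of Theorem \ref{thm-1} yields assertion~(i).

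For part~(ii) I would simply transpose the identity of Theorem \ref{thm-1}: solving $\eta_{\mathcal{H}}(G)=m(G)-n(G)-t_G(\triangle)+\omega(G)$ for the triangle count $t_G(\triangle)$ gives at once $t_G(\triangle)=m(G)-n(G)-\eta_{\mathcal{H}}(G)+\omega(G)$, which is exactly the claimed expression.

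The honest assessment is that there is no substantive obstacle: both parts are a reinterpretation and an algebraic rearrangement of the single identity proved in Theorem \ref{thm-1}. The one place deserving a sentence of justification is the triple identification in~(i), and even this rests entirely on facts already assembled earlier --- the matrix representation of $\Lambda$, the definition of $\mathcal{V}_0$ via \eqref{vector-F-G}, and the symmetry (hence diagonalizability) of $\mathcal{H}(G)$ --- so the argument amounts to citing the relevant prior results and collecting them.
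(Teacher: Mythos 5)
Your proposal is correct and matches the paper's treatment: the paper derives both parts directly from Theorem \ref{thm-1} (stating only that they ``instantly follow''), with the identification $\dim\ker(\Lambda)=\dim\mathcal{V}_0=\eta_{\mathcal{H}}(G)$ already set up in the discussion preceding Lemma \ref{lem-a-1} via the diagonalizability of $\mathcal{H}(G)$. Your version simply makes explicit the bookkeeping the paper leaves implicit, so there is nothing to correct.
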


\section{Further remarks}

In the article we have regarded a graph as a discrete analogue of vector Laplacian and vector Laplace equation. In Theorem \ref{H-def},  we give the graph matrix representation $\mathcal{H}(G)$ of vector Laplacian, which is called {\it Helmholtzian matrix} of a graph $G$. Historically, it is the first one that is indexed by the edge set of a graph, compared with all the other graph matrices indexed by the vertex set of a graph. On the other hand, the graph Helmholtzian has been applied in the statistical ranking \cite{jiang-lim} and the random walks on simplicial complexes \cite{sch-Siam}. Just as the roles of the graph (normalized) Laplacian in studying the structural and dynamical properties of ordinary networks \cite{chung-bool1,chung-book}, we look forward to the applications of graph Helmholtzian in the simplicial network, due to the Helmholtz operator is a special case of Hodge Laplacians based upon the clique complex of a graph. Henceforward, a spectral theory based on graph Helmholtzian is expected \cite{lu-shi-wang}.

In Proposition \ref{thm-cor}(i), we get the dimension of solution space of a discrete analogue of PDE: vector Laplacian, which possesses natural counterparts on graphs. Moreover, the $\ker(\Lambda)$ is involved in the Helmholtz Decomposition Theorem, a special case of Hodge decomposition holding in general for any simplicial complex of any dimension \cite[Theorem 2]{jiang-lim}:\smallskip

\noindent{\bf Helmholtz Decomposition Theorem\cite[Theorem 2]{jiang-lim}.}
Let $G$ be a graph and $K_G$ be its clique complex. The space of edge flows on $G$, i.e.
$C^1(K_G,\mathbb{R}) = L^2_\wedge(E)$, admits an orthogonal decomposition
$$C^1(K_G,\mathbb{R}) = {\rm im}(\operatorname{grad}) \oplus  \ker(\Lambda) \oplus {\rm im}(\operatorname{curl}^*).$$
See \cite[Section 6.3]{lim2020} for more applications of the Hodge Laplacian and Hodge decomposition on graphs to other fields.

In the end, the number of triangles in a graph/network is a main metric to extract insights for an extensive  range of graph applications, see \cite[eg.]{has-dav,pan-etal,rez-etal,zhang-etal} for more details. Hence, the significance of triangle counting is posed by the GraphChallenge competition \cite{kep}, which is now known in Proposition \ref{thm-cor}(ii).

\section*{Acknowledgments}

Jianfeng Wang expresses his sincere thanks to Prof. Lek-Heng Lim for his kind suggestion. Jianfeng Wang is  supported by National Natural Science Foundation of China (No. 12371353) and Special Fund for Taishan Scholars Project. Lu Lu is  supported by National Natural Science Foundation of China (No. 11671344).


PS: In their longer manuscript \cite{lu-shi-wang}, the last two authors of this paper,Yongtang Shi and Yi Wang have considered the spectral properties of graph Helmholtzian, including  the irreducibility, the interlacing theorem, the graphs with few Helmholtzian eigenvalues, the coefficients of Helmholtzian polynomial, the relations between Helmholtzian and Laplacian spectra, the Helmholtzian spectral radii and their limit points, the least Helmholtzian eigenvalue, the product graphs and the Helmholtzian integral graphs. On the other hand, Jianfeng Wang and his student Zhen Chen have determined the Helmholtzian eigenvalues of threshold graphs recently.
\end{document}